\documentclass[a4paper,12pt]{article}


\usepackage{amsmath, wrapfig}
\usepackage{dsfont, a4wide, amsthm, amssymb, amsfonts, graphicx}
\usepackage{fancyhdr, xspace, psfrag, setspace, supertabular, color, enumerate}
\usepackage{hyperref}

\setcounter{MaxMatrixCols}{20}

\newtheorem{theorem}{Theorem}[section]
\newtheorem{proposition}[theorem]{Proposition}
\newtheorem{lemma}[theorem]{Lemma}

\newtheorem{corollary}[theorem]{Corollary}

\newtheorem{definition}[theorem]{Definition}

\newtheorem{example}[theorem]{Example}
\newtheorem{question}[theorem]{Question}

\def\cp{\mathcal{P}}

\def\F{\mathbb{F}}

\def\N{\mathbb{N}}

\DeclareMathOperator{\Mc}{Mc}
\DeclareMathOperator{\Monec}{M_1c}
\DeclareMathOperator{\Mtwoc}{M_2c}
\DeclareMathOperator{\Monetwoc}{M_1 \wedge M_2c}

\DeclareMathOperator{\sr}{sr}

\title{Simple proofs for lattice coverings and sparse tensors}
\author{Thomas Karam\footnote{Mathematical Institute, University of Oxford. Email: \texttt{thomas.karam@maths.ox.ac.uk}.}}

\begin{document}
\maketitle

\begin{abstract}
We provide simple proofs of analogues for coverings numbers of lattices of several recently studied basic statements on the ranks of tensors. We highlight the differences and analogies between the proofs in both settings.
\end{abstract}

\tableofcontents

\section{Introduction}

Throughout, for any integer $d \ge 1$ we will take $n_1, \dots, n_d$ to be positive integers. All our statements will be uniform in $n_1, \dots, n_d$ (but not in $d$) unless stated otherwise. We will denote a field by $\F$, and all our statements will be uniform with respect to $\F$. An order-$d$ \emph{lattice subset} will be taken to be a subset of $[n_1] \times \dots \times [n_d]$, and an order-$d$ \emph{tensor} will be taken to be a function $[n_1] \times \dots \times [n_d] \to \F$ (and more generally a function $X_1 \times \dots \times X_d \to \F$ where $X_1, \dots, X_d$ are finite subsets of $\N$). For any positive integer $k$ we will denote by $[k]$ the set $\{1, \dots, k\}$ of positive integers up to $k$.

The present paper will involve several notions of richness of lattice subsets and of tensors. In the case of lattice subsets, perhaps the simplest such notion is their size. Another is the smallest number of sets of a specified type which together suffice to cover the lattice subset. That notion specialises to the size if the specified sets for the covering are all the singletons. In the case of tensors, arguably the simplest well-known notion is the matrix rank, which in turn generalises to a wide class of notions of rank for $d \ge 3$.

There is a simple connection between order-$d$ lattice subsets and tensors: given an order-$d$ tensor $T$, the support \[\{Z(T):= \{(x_1, \dots, x_d) \in [n_1] \times \dots \times [n_d]: T(x_1, \dots, x_d) \neq 0 \}\] of $T$ is an order-$d$ lattice subset. Conversely an order-$d$ lattice subset may be viewed as an order-$d$ tensor with entries in $\{0,1\}$. We now formally define the class of covering numbers of lattice subsets which we will be concerned with in this paper.

\begin{definition}

Let $d \ge 1$ be an integer. We make the following definitions.

If $B$ is a subset of $[d]$, then we say that a \emph{$B$-subspace} is a maximal subset $S$ of $[n_1] \times \dots \times [n_d]$ such that $x_i = y_i$ whenever $x,y \in S$ and $i \in B^c$, and say that it has \emph{order} $|B|$.

If $M$ is a non-empty subset of $\cp([d])$, then we say that an $M$-subspace is a $B$-subspace for some $B \in M$. 

If $A$ is furthermore a subset of $[n_1] \times \dots \times [n_d]$ then we say that $A$ has $M$-covering number at most $1$ if $A$ is contained in an $M$-subspace.

The $M$-covering number of $A$, denoted by $\Mc(A)$, is the smallest nonnegative integer $k$ such that we can write $A$ as a union \begin{equation} A \subset A_1 \cup \dots \cup A_k \label{M-covering decomposition} \end{equation} of $k$ subsets $A_1, \dots, A_k$ of $[n_1] \times \dots \times [n_d]$ which each have $M$-covering number at most $1$.

We will refer to an inclusion such as \eqref{M-covering decomposition} as an \emph{$M$-covering decomposition}, and to the integer $k$ inside such an inclusion as the \emph{length} of that decomposition.

\end{definition}

For instance, for $d=2$, a $\{1\}$-subspace is a horizontal line, a $\{2\}$-subspace is a vertical line, and any set contained in $\{x=1\} \cup \{y=1\}$ has $\{\{1\},\{2\}\}$-covering number at most $2$. For any $d \ge 1$, a $\emptyset$-subspace is a point, and the $\{\emptyset\}$-covering number of $A$ is equal to the size of $A$. At the other extreme, a $[d]$-subspace is the whole of $[n_1] \times \dots \times [n_d]$, and any subset $A$ has $\{[d]\}$-covering number at most $1$.

As for tensors, there is a somewhat analogous class of notions which also arises from assigning different roles to different coordinates, although there is extra flexibility which led us (and has led us in previous work \cite{K. subtensors}, \cite{K. interplay}) to consider a class which is indexed by families of partitions of $[d]$ rather than families of subsets of $[d]$. This class had also arisen in the paper of Naslund \cite{Naslund} which originally introduced the partition rank of tensors.

\begin{definition} \label{Rrk definition} Let $d \ge 1$ be an integer, and let $R$ be a non-empty family of partitions of $\lbrack d \rbrack$. We say that an order-$d$ tensor $T$ has \emph{$R$-rank at most $1$} if there exists a decomposition \[ T(x_1,\dots ,x_d) = \prod_{J \in P} a_J(x(J)) \] of $T$ that holds for every $(x_1,\dots,x_d) \in [n_1] \times \dots \times [n_d]$, where $P \in R$ is a partition of $[d]$, and $a_J: \prod_{j \in J} [n_j] \rightarrow \mathbb{F}$ is a function for each $J \in P$. We say that the \emph{$R$-rank} of $T$ is the smallest nonnegative integer $k$ such that we can write $T$ as a sum of $k$ order-$d$ tensors each with $R$-rank at most $1$. \end{definition}

After the usual rank of matrices, the notion of $R$-rank which has received the most attention is the tensor rank, which corresponds to the case where $R$ contains only the discrete partition $\{\{1\}, \dots, \{d\}\}$, due to its applications in computational complexity theory. The notion of rank which has the strongest known connections to some covering number is the slice rank, which corresponds to \[R = \{\{\{1\},\{1\}^c\}, \{\{2\},\{2\}^c\}, \dots, \{\{d\},\{d\}^c\}\}\] and was formulated by Tao in 2016 \cite{Tao} when rewriting the proof of the breakthrough by Croot-Lev-Pach \cite{Croot Lev Pach} and then Ellenberg and Gijswijt \cite{Ellenberg Gijswijt} on the cap-set problem in a more symmetric way. Meanwhile, an appealing analogue to the slice rank on the side of covering numbers is the $M$-covering number where $M = \{\{j\}^c: j \in [d]\}$. We shall refer to this quantity as the \emph{slice covering number}. After the slice rank was introduced, Sawin and Tao were able to obtain lower bounds on the slice rank of a tensor in terms of slice covering numbers. In one case, they discovered an exact identity between the two, which follows immediately from their more general statement \cite{Sawin and Tao}, Proposition 4.

If $d \ge 1$ is an integer, and $A$ is an order-$d$ lattice subset, then we say that $A$ is an \emph{antichain} if no two distinct elements $x,y \in A$ satisfy $x_j \le y_j$ for every $j \in [d]$.

\begin{proposition} \label{Sawin-Tao connection}

Let $d \ge 2$ be an integer, and let $T$ be an order-$d$ tensor. If the support $Z(T)$ of $T$ is contained in an antichain, then the slice rank of $T$ is equal to the slice covering number of $Z(T)$.

\end{proposition}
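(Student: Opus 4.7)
The claimed equality comprises two inequalities. The bound $\text{slice rank}(T) \le \Mc(Z(T))$ with $M=\{\{j\}^c: j\in[d]\}$ in fact holds for any tensor $T$ and does not require the antichain hypothesis: given a slice covering $Z(T)\subseteq L_1\cup\dots\cup L_k$ with $L_i=\{x:x_{j_i}=c_i\}$, one partitions $Z(T)$ by assigning each of its points to one slice containing it and writes $T=T_1+\dots+T_k$ with each $T_i$ supported inside the single slice $L_i$; such a $T_i$ factors as $\mathbf{1}[x_{j_i}=c_i]\cdot b_i(x_{-j_i})$ where $b_i(x_{-j_i})=T_i(c_i,x_{-j_i})$, and is therefore slice-$1$. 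Summing, $\text{slice rank}(T)\le k$.

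For the reverse inequality $\text{slice rank}(T)\ge\Mc(Z(T))$, which is the substantive content and where the antichain hypothesis is essential, the plan is to invoke Sawin--Tao's Proposition~4. Given any slice-rank decomposition $T=\sum_{l=1}^r a_l(x_{j_l})b_l(x_{-j_l})$ of minimal length $r$, I would regroup by direction and write $T=F_1+\dots+F_d$, where $F_j$ collects the slice-$1$ terms with $j_l=j$. Viewed as an $X_j\times X_{-j}$ matrix, $F_j$ has matrix rank at most $r_j=\#\{l:j_l=j\}$, with $\sum_j r_j=r$. For each $j$, I would choose a row basis $P_j\subseteq X_j$ of $F_j$ with $|P_j|=r_j$, so that every row expands as $F_j(i,\cdot)=\sum_{p\in P_j}c_{i,p}^{(j)}F_j(p,\cdot)$. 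The key claim is then that $Z(T)\subseteq\bigcup_{j=1}^d\bigcup_{p\in P_j}\{x:x_j=p\}$, which exhibits a slice covering of $Z(T)$ of length $\sum_j r_j=r$, completing the proof.

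The antichain hypothesis enters crucially through the following vanishing property: for any $x\in Z(T)$, any $j\in[d]$, and any $p\neq x_j$, the point obtained from $x$ by replacing its $j$-th coordinate by $p$ is comparable to $x$ in the product order and so lies outside the antichain $Z(T)$; thus $T$ vanishes at every one-coordinate perturbation of $x$. If a point $x\in Z(T)$ had $x_j\notin P_j$ for every $j$, then expanding $F_j(x)=\sum_{p\in P_j}c_{x_j,p}^{(j)}F_j(p,x_{-j})$ in each direction and iteratively substituting, using the one-coordinate vanishings of $T$ to eliminate unwanted summands, one should be driven to conclude $T(x)=0$, contradicting $x\in Z(T)$. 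The main obstacle lies in making this iteration rigorous: a single substitution reformulates $T(x)$ in terms of $F_{j'}$-values at two-coordinate perturbations of $x$, which need not lie outside the antichain, so the argument requires a careful inductive scheme---this is exactly what Sawin--Tao carry out in their proof of Proposition~4.
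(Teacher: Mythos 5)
The paper itself offers no proof of this proposition: it states that the result ``follows immediately from their more general statement \cite{Sawin and Tao}, Proposition~4,'' and leaves it at that. Your plan ultimately does the same thing for the substantive direction --- you invoke Sawin--Tao's Proposition~4 --- so in the end you and the paper are relying on the same external result. Your proof of the easy inequality (slice rank $\le$ slice covering number, which needs no antichain hypothesis) is correct and worth having spelled out: partitioning the support among a minimal slice covering and factoring each piece as $\mathbf{1}[x_{j_i}=c_i]\cdot T_i(c_i,x_{-j_i})$ is exactly right.

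One caution on your speculative ``row-basis'' sketch for the hard direction. As you yourself note, after one substitution you arrive at $F_{j'}$-values at two-coordinate perturbations of $x$, and those points need not lie outside the antichain, so the one-coordinate vanishing of $T$ no longer helps; moreover the new point still has $j'$-coordinate $x_{j'}\notin P_{j'}$, so the recursion does not visibly terminate or decrease in complexity. This is not merely a technical hurdle: Sawin--Tao's actual proof of Proposition~4 does not run via picking row bases of the $F_j$'s and chasing perturbations, but via their Proposition~1, a duality (minimax over probability measures / linear-programming) characterisation of slice rank which they then combine with the antichain structure. So your sketch is best regarded as an independent (and currently incomplete) attempt, not a reconstruction of the cited argument; since you do explicitly fall back on citing Proposition~4, the overall plan is sound and matches the paper, but the sketch should not be mistaken for a proof outline of that result.
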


Proposition \ref{Sawin-Tao connection} cannot be extended to arbitrary tensors: for instance a tensor with entries all equal to $1$ has slice rank equal to $1$, but may have unbounded slice covering number. Nonetheless, the class of tensors covered by Proposition \ref{Sawin-Tao connection} is already wide enough to have combinatorial applications. In particular, as the remainder of the blog post of Sawin and Tao shows, it is possible to use this characterisation to show the sharpness of the slice rank bounds for both the capset problem (\cite{Sawin and Tao}, Proposition 9) and the result \cite{Naslund and Sawin} of Naslund and Sawin on sunflower-free sets (\cite{Sawin and Tao}, Proposition 10).

In this paper we give simple proofs of analogues of the main results of previous works \cite{Gowers}, \cite{K. subtensors}, \cite{K. decompositions}, \cite{K. interplay}, for slice coverings numbers of lattices rather than ranks of tensors, and compare the difficulties and proofs of the statements in both settings.

How close the analogy will be between the cases of the covering numbers of lattice subsets and of the ranks of tensors will depend on the specific notions involved and on the question at hand. Proposition \ref{Sawin-Tao connection} shows that there are situations where this analogy is strong to the extent that we have an exact correspondence between the two. As we will explain throughout the remainder of the paper, in some other situations the proofs will be simpler for covering numbers, but there will be still strong analogies in part of the proof methods. And finally in yet some other cases the proofs for covering numbers will be considerably simpler than and will have little resemblance with the proofs for ranks.

\section*{Acknowledgements}

The author thanks Timothy Gowers for a discussion in 2021 where the suggestion arose that proving statements on covering numbers of lattices could be used as a warm-up for their more difficult tensor versions. The author also thanks the organisers of the European Conference on Combinatorics, Graph Theory and Applications 2023 (Eurocomb’23) for encouragement after the conference which led to the present paper being put together.

\section{Main results}

We devote the present section to our main results regarding covering numbers. We begin with a property for which the proofs contain major difficulties for the ranks of tensors, but are immediate for covering numbers of lattice subsets. 

For any integer $d \ge 1$, we say that lattice subsets $A^1$, $A^2$ are in diagonal sum if we can write \[A_1 \subset X_1^1 \times \dots \times X_d^1 \text{ and } A^2 \subset X_1^2 \times \dots \times X_d^2\] for some $X_1^1, X_1^2 \subset [n_1]$, \dots, $X_d^1, X_d^2 \subset [n_d]$ such that the intersection $X_j^1 \cap X_j^2$ is empty for every $j \in [d]$. We likewise say that order-$d$ tensors are in diagonal sum if their supports are in diagonal sum.

Whether the $R$-rank of tensors is additive over diagonal sums for a given $R$ is still an open question in general. In particular that question is still open for the partition rank. For the tensor rank, that question remained for a long time a conjecture of Strassen until it was negatively settled by Shitov \cite{Shitov} in 2019. It was then only in 2021 that Gowers \cite{Gowers} proved diagonal additivity for the slice rank. On the other hand, diagonal additivity is essentially always satisfied for covering numbers of lattice subsets.

\begin{proposition} \label{Diagonal sum for lattice coverings}

Let $d \ge 1$ be an integer, and let $M \subset \cp([d])$ be non-empty. If $A_1$ and $A_2$ are order-$d$ lattice subsets in diagonal sum and $M$ does not contain $[d]$, then \[\Mc(A_1 \cup A_2) = \Mc(A_1) + \Mc(A_2).\]

\end{proposition}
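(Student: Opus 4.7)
The upper bound $\Mc(A_1 \cup A_2) \le \Mc(A_1) + \Mc(A_2)$ is immediate: concatenate optimal $M$-covering decompositions of $A_1$ and of $A_2$ to obtain one of $A_1 \cup A_2$. The content of the proposition is therefore the reverse inequality, and the plan is to show that in any $M$-covering decomposition of $A_1 \cup A_2$, each individual $M$-subspace used actually meets at most one of $A_1, A_2$.

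Fix an $M$-covering decomposition $A_1 \cup A_2 \subset S_1 \cup \dots \cup S_k$ with $S_i$ a $B_i$-subspace and $B_i \in M$. The key observation to exploit is the hypothesis $[d] \notin M$: for every $i \in [k]$ we may pick some coordinate $j(i) \in B_i^c$, and by definition of a $B_i$-subspace all points of $S_i$ share a common value $c_i \in [n_{j(i)}]$ in their $j(i)$-th coordinate. Since $X_{j(i)}^1 \cap X_{j(i)}^2 = \emptyset$, the scalar $c_i$ lies in at most one of $X_{j(i)}^1$ and $X_{j(i)}^2$; whichever of $A_1$ or $A_2$ corresponds to the other set must then be disjoint from $S_i$ because points of $A_\ell$ have $j(i)$-th coordinate in $X_{j(i)}^\ell$.

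This lets me partition $[k] = I_1 \sqcup I_2$ (placing indices $i$ with $S_i \cap A_1 = \emptyset$ in $I_2$, and all others in $I_1$, say) in such a way that $A_\ell \subset \bigcup_{i \in I_\ell} S_i$ for $\ell = 1, 2$. Each of these is a valid $M$-covering decomposition of $A_\ell$, so $\Mc(A_\ell) \le |I_\ell|$, and summing gives $\Mc(A_1) + \Mc(A_2) \le |I_1| + |I_2| = k$. Minimising $k$ yields the desired lower bound.

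There is no real obstacle here: the only place where the hypotheses are used is the existence of a coordinate in $B_i^c$ (which needs $[d] \notin M$) and the disjointness $X_{j}^1 \cap X_{j}^2 = \emptyset$ for all $j$, and the argument is essentially a one-line application of the pigeonhole principle to the fixed coordinate of each covering subspace. This is in sharp contrast with the tensor analogue, where the absence of a single ``fixed coordinate'' inside a rank-$1$ summand forces a substantially more involved argument.
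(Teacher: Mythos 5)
Your proof is correct and matches the paper's reasoning: the paper disposes of this proposition in one sentence, observing exactly your key point that a $B$-subspace with $B\neq[d]$ fixes some coordinate $j\in B^c$ and hence, by disjointness of $X_j^1$ and $X_j^2$, cannot meet both $A_1$ and $A_2$. Your write-up simply fills in the routine bookkeeping (partitioning the covering indices) that the paper leaves implicit.
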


If on the other hand $M$ contains $[d]$, then this identity becomes false, since the $M$-covering number is then always at most $1$. Proposition \ref{Diagonal sum for lattice coverings} is immediate, because a given $B$-subspace for any $B \neq [d]$ cannot simultaneously contain points of $A_1$ and of $A_2$, whereas a single tensor with $R$-rank equal to $1$ (for any notion of $R$-rank) can have an effect on both diagonal blocks at once.

Another topic where the proofs are much simpler for lattice covering numbers than for ranks of tensors is that of the interplay between the different notions. In \cite{K. interplay} we addressed the following question: if the $R_1, \dots, R_k$-ranks of a tensor are all bounded, then for which $R$ is the $R$-rank of that tensor necessarily bounded ? Proposition \ref{interplay between covering numbers} answers the analogous question for covering numbers.

If $d \ge 1$ is an integer, and $M_1, M_2 \subset \cp([d])$ are non-empty then we write \[M_1 \wedge M_2 = \{B_1 \cap B_2: B_1 \in M_1, B_2 \in M_2\}.\] 

\begin{proposition} \label{interplay between covering numbers}

Let $d \ge 1$ be an integer. \begin{enumerate}

\item Let $M \subset \cp([d])$ be non-empty and let $B \in \cp([d])$. The $M$-covering number of any $B$-subspace is equal to \[\min_{B’ \in M} \prod_{j \in B \setminus B’} n_j.\] In particular, if $n_1=\dots=n_d$ are all equal to some common integer $n$, then this quantity is equal to \[n^{\min_{B’ \in M} |B \setminus B’|},\] which is equal to $1$ if $B$ is contained in some $B' \in M$ and is at least $n$ otherwise.

\item Let $M_1, M_2 \subset \cp([d])$ be non-empty, and let $k_1, k_2$ be nonnegative integers. If $A$ is an order-$d$ lattice subset such that $\Monec A \le k_1$ and $\Mtwoc A \le k_2$ then $\Monetwoc A \le k_1k_2$.

\end{enumerate} \end{proposition}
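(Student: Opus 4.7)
The plan is to handle the two parts independently; both reduce to elementary counting arguments, so I do not expect any substantial obstacle.

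For part 1, I would prove the upper and lower bounds separately. For the upper bound, fix any $B' \in M$ and partition the $B$-subspace according to the values of the coordinates indexed by $B \setminus B'$. These coordinates are free inside the $B$-subspace but must be fixed inside a $B'$-subspace, so each cell of the partition is a $(B \cap B')$-subspace and hence contained in a $B'$-subspace. The partition has exactly $\prod_{j \in B \setminus B'} n_j$ cells, and minimizing over $B' \in M$ yields the stated upper bound. For the lower bound, take any $M$-covering of the $B$-subspace as a union of $k$ subspaces $S_1, \dots, S_k$ with each $S_i$ a $B'_i$-subspace for some $B'_i \in M$. The intersection $S_i \cap (B\text{-subspace})$ has size at most $\prod_{j \in B \cap B'_i} n_j$, since its free coordinates are indexed by $B \cap B'_i$. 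Summing these sizes and using that the $B$-subspace has size $\prod_{j \in B} n_j$ yields
\[\sum_{i=1}^k \frac{1}{\prod_{j \in B \setminus B'_i} n_j} \ge 1.\]
Since each $B'_i$ lies in $M$, every summand is at most $1/\min_{B' \in M} \prod_{j \in B \setminus B'} n_j$, and therefore $k \ge \min_{B' \in M} \prod_{j \in B \setminus B'} n_j$. The special case with $n_1 = \dots = n_d = n$ is then an immediate rewriting.

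For part 2, the plan is to intersect the two coverings. Take $A \subset S_1 \cup \dots \cup S_{k_1}$ with each $S_i$ a $B_i$-subspace for some $B_i \in M_1$, and $A \subset T_1 \cup \dots \cup T_{k_2}$ with each $T_j$ a $B'_j$-subspace for some $B'_j \in M_2$. Then $A$ is contained in the union of the $k_1 k_2$ pairwise intersections $S_i \cap T_j$. Each such intersection fixes the coordinates in $B_i^c \cup (B'_j)^c = (B_i \cap B'_j)^c$ at specified values coming from $S_i$ and from $T_j$, so it is either empty (if these values conflict on the overlap $B_i^c \cap (B'_j)^c$) or a $(B_i \cap B'_j)$-subspace, which lies in $M_1 \wedge M_2$ by definition. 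This furnishes an $(M_1 \wedge M_2)$-covering of $A$ of length at most $k_1 k_2$.

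If any step requires care, it is the lower bound in part 1, where one must notice that the counting inequality converts into the desired bound on $k$ by bounding each summand using the fact that $B'_i \in M$. Part 2 is essentially tautological once one observes that the intersection of a $B$-subspace and a $B'$-subspace is either empty or a $(B \cap B')$-subspace.
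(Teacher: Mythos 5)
Your proof is correct and follows essentially the same route as the paper's: in part (i) the upper bound is obtained by tiling the $B$-subspace with $B'$-subspaces for an optimal $B' \in M$, and the lower bound by the same cardinality count (the paper phrases it via the identity $|S|/|S \cap S'| = \prod_{j \in B \setminus B'} n_j$, which is exactly your summed inequality after dividing by $|S|$); in part (ii) both you and the paper intersect the two coverings and observe that the pairwise intersections are $M_1 \wedge M_2$-subspaces. Your write-up is slightly more careful in explicitly allowing the intersections in part (ii) to be empty, but this is a cosmetic difference.
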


\begin{proof}

We first prove (i). If $S$ is a $B$-subspace and $S’$ is a $B’$-subspace for some $B’ \in M$, then we have \[|S| / |S \cap S’| = \prod_{j \in B \setminus B’} n_j,\] which shows the lower bound. Choosing $B’$ which minimises this expression and then considering the $M$-covering of $B$ consisting only of $B’$-subspaces then provides the upper bound.

We now prove (ii). We begin by writing respective $M_1$- and $M_2$-covering decompositions \[A \subset S_1^1 \cup \dots \cup S_{k_1}^1 \text{ and } A \subset S_1^2 \cup \dots \cup S_{k_2}^2 \] of $A$, where each set $S_i^1$ is an $M_1$-subspace and each set $S_i^2$ is an $M_2$-subspace. It follows that $A$ is a subset of \[\bigcup_{1 \le i_1 \le k_1} \bigcup_{1 \le i_2 \le k_2} S_{i_1}^1 \cap S_{i_2}^2.\] For any $i_1 \in [k_1]$ and every $i_2 \in [k_2]$ the intersection $S_{i_1}^1 \cap S_{i_2}^2$ is a $B_{i_1} \cap B_{i_2}$-subspace for some $B_{i_1} \in M_1$ and $B_{i_2} \in M_2$; in other words it is an $M_1 \wedge M_2$-subspace. \end{proof}

Let $X_1 \subset [n_1], \dots, X_d \subset [n_d]$ be coordinate subsets. If $A$ is an order-$d$ lattice subset then we write \[A(X_1 \times \dots \times X_d) = A \cap (X_1 \times \dots \times X_d)\] for the restriction of $A$ to the product $X_1 \times \dots \times X_d$. Likewise, if $T$ is an order-$d$ tensor then we write $T(X_1 \times \dots \times X_d)$ for the tensor $T’: X_1 \times \dots \times X_d \to \F$ defined by \[T’(x_1, \dots, x_d) = T(x_1, \dots, x_d)\] for every $(x_1, \dots, x_d) \in [n_1] \times \dots \times [n_d]$.

The main results which we will focus on throughout the remainder of this paper are the following four. The first is an analogue for coverings of lattice subsets of \cite{K. subtensors}, Theorem 1.4, which informally states that a high-rank tensor must necessarily have a high-rank subtensor with bounded size.

\begin{theorem}\label{linear bound for lattice subsets}

Let $d,l \ge 1$ be integers, and let $M \subset \cp([d])$ be non-empty. If $A$ is an order-$d$ lattice subset such that $\Mc(A) \ge |M|l$, then there exist $X_1 \subset [n_1], \dots, X_d \subset [n_d]$ each with size at most $l$ such that \[\Mc A(X_1 \times \dots \times X_d) \ge l.\]

\end{theorem}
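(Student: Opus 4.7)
The plan is to extract $l$ points of $A$ forming an $M$-antichain---meaning that no two of them lie in a common $M$-subspace---and then to let $X_j$ be the set of $j$-th coordinates of these points. By construction $|X_j|\le l$, and the restriction $A(X_1 \times \dots \times X_d)$ contains the $l$ extracted points; since they are pairwise in no common $M$-subspace, any cover of this restriction by $M$-subspaces must use at least $l$ pieces, giving $\Mc A(X_1 \times \dots \times X_d) \ge l$ as required.

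To obtain the points I would proceed greedily. At step $i$, having selected $x^{(1)},\dots,x^{(i-1)} \in A$, I exclude the set $\mathcal{C}_{i-1}$ defined as the union of all $M$-subspaces through at least one of these points. Each chosen point lies in exactly $|M|$ such $M$-subspaces---one $B$-subspace for each $B \in M$---so $\mathcal{C}_{i-1}$ is the union of at most $(i-1)|M|$ $M$-subspaces. The hypothesis $\Mc(A) \ge |M|l$ then gives $\Mc(A) > (i-1)|M|$ for every $i \le l$, so $A$ cannot lie entirely in $\mathcal{C}_{i-1}$, and a new point $x^{(i)} \in A \setminus \mathcal{C}_{i-1}$ is available to pick. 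The exclusion guarantees that for each $j < i$ no $M$-subspace contains both $x^{(j)}$ and $x^{(i)}$, giving the desired antichain property.

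I do not expect a genuine obstacle in this argument; its content is essentially the elementary observation that a single point of $A$ participates in only $|M|$ $M$-subspaces, which converts the budget $\Mc(A) \ge |M|l$ into the $l$ successive greedy selections. The only minor point to verify carefully is that every $M$-subspace of the restricted ambient $X_1 \times \dots \times X_d$ is the trace on $X_1 \times \dots \times X_d$ of an $M$-subspace of $[n_1] \times \dots \times [n_d]$, so that the antichain lower bound established in the full ambient really applies to the covering number of the restriction.
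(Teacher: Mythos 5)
Your proposal is correct and follows essentially the same route as the paper. The paper packages the greedy extraction as a separate lemma stating that the $M$-independence number of $A$ is at least $\Mc(A)/|M|$, proved by exactly the same greedy process (repeatedly excluding the union $M(u)$ of the $|M|$ $M$-subspaces through each chosen point), and then projects the resulting independent set of size $l$ onto the coordinate axes; your argument inlines the lemma but is otherwise identical. The "minor point" you flag at the end is genuine but harmless: in the paper's conventions $A(X_1 \times \dots \times X_d)$ is just $A \cap (X_1 \times \dots \times X_d)$ viewed inside the same ambient $[n_1]\times\dots\times[n_d]$, so the conclusion follows directly from monotonicity of $\Mc$ under inclusion, and in any case restricting a $B$-subspace to $X_1\times\dots\times X_d$ gives a $B$-subspace of the smaller ambient and conversely, so the covering number is unaffected.
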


In the case of tensors, linear bounds are not known. In \cite{K. subtensors} it was conjectured (Conjecture 13.1) that we can take linear bounds for both the rank of the original tensor and the sizes of the sets $X_1, \dots, X_d$. Thereafter, motivated by an application \cite{Briet Buhrman Castro-Silva Neumann} to limitations on the decoding of corrupted classical and quantum error-correcting codes with $\mathrm{NC}^0[\oplus]$ circuits, Bri\"et and Castro-Silva also asked in \cite{Briet Castro-Silva}, Section 4 for linear bounds (referred to as the “linear core property” in that paper) for a class of notions of rank called “natural rank functions” (which neither contains nor is contained in the class of notions of $R$-ranks).

Motivated by an extension of a theorem of Green and Tao \cite{Green and Tao} on the equidistribution of high-rank polynomials to subsets of finite prime fields, ultimately realised in \cite{Gowers and K equidistribution}, another statement involving subtensors was proved in \cite{K. subtensors}, Theorem 1.8: if a tensor has high rank after any changes of its entries that do not have their coordinates all pairwise distinct, then it has a high-rank subtensor consisting entirely of entries with pairwise distinct coordinates. There again, linear bounds were conjectured in \cite{K. subtensors}, Conjecture 13.6. We show that an analogous result holds for lattice subsets and prove linear bounds there.

If $d \ge 1$ is an integer then we write $E$ for the set of points of $[n_1] \times \dots \times [n_d]$ which do \emph{not} have all their coordinates pairwise distinct.

\begin{theorem}\label{off-diagonal covering number}

Let $d,l \ge 1$ be integers, and let $M \subset \cp([d])$ be non-empty. If $A$ is an order-$d$ lattice subset such that $\Mc(A \setminus E) \ge d^d|M|l$ then there exist $X_1 \subset [n_1], \dots, X_d \subset [n_d]$ pairwise disjoint such that \[\Mc A(X_1 \times \dots \times X_d) \ge l.\]

\end{theorem}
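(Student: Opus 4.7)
The plan is to use a probabilistic restriction to obtain pairwise disjoint coordinate sets, and to control the $M$-covering number of the restriction through a linear-programming-style argument.

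First, I would let $N$ be large enough to contain every $[n_i]$, take a uniformly random function $c : [N] \to [d]$, and set $X_i = [n_i] \cap c^{-1}(i)$. The sets $X_1, \dots, X_d$ are then automatically pairwise disjoint, and for any $p = (x_1, \dots, x_d) \in A \setminus E$ the coordinates are pairwise distinct, so the events $c(x_i) = i$ are independent and $p$ survives in $X_1 \times \dots \times X_d$ with probability exactly $d^{-d}$. Writing $A_c = A \cap (X_1 \times \dots \times X_d)$, the disjointness of the $X_i$ forces $A_c \subset A \setminus E$, and the goal becomes to find a coloring with $\Mc(A_c) \ge l$.

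To control $\Mc(A_c)$ I would introduce the fractional $M$-covering number $\Mc^{*}(B)$, defined as the minimum of $\sum_S w_S$ over nonnegative weights on $M$-subspaces satisfying $\sum_{S \ni p} w_S \ge 1$ for every $p \in B$; by LP duality this equals the maximum of $\sum_p y_p$ over nonnegative $y : B \to \mathbb{R}$ with $\sum_{p \in B \cap S} y_p \le 1$ for every $M$-subspace $S$. The key auxiliary lemma I would then prove is the integrality gap bound $\Mc(B) \le |M| \, \Mc^{*}(B)$ for every lattice subset $B$. Given an optimal fractional $w$, by pigeonhole on subspace type one can assign to each $p \in B$ some $B_p \in M$ such that the $B_p$-subspaces contribute at least a $1/|M|$-fraction of the cover at $p$; this partitions $B$ into pieces $B_{B'}$ indexed by $B' \in M$, on each of which scaling $w$ by $|M|$ gives a fractional cover by $B'$-subspaces alone. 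For a single subset type the $B'$-subspaces partition the ambient cube, so the single-type covering LP has no integrality gap, and summing the resulting integer $\{B'\}$-covers over $B' \in M$ yields $\Mc(B) \le |M| \Mc^{*}(B)$.

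Finally I would let $y$ be an optimal dual solution for $A \setminus E$ with $\sum_p y_p = \Mc^{*}(A \setminus E)$, and for each coloring $c$ set $y^c_p = y_p$ if $p \in A_c$ and $y^c_p = 0$ otherwise. Restricting the point set only weakens the dual constraints, so $y^c$ is dual feasible for $A_c$ and $\Mc^{*}(A_c) \ge \sum_p y^c_p$. Taking expectation over $c$,
\[ \mathbb{E}_c[\Mc(A_c)] \ge \mathbb{E}_c[\Mc^{*}(A_c)] \ge \frac{1}{d^d}\sum_{p \in A \setminus E} y_p = \frac{\Mc^{*}(A \setminus E)}{d^d} \ge \frac{\Mc(A \setminus E)}{|M| d^d} \ge l, \]
so some $c$ yields $\Mc(A_c) \ge l$, giving the required pairwise disjoint $X_1, \dots, X_d$. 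The main obstacle in this plan is the integrality-gap lemma, which is precisely where the factor $|M|$ in the hypothesis is consumed; once it is established, the random restriction of the dual is essentially routine.
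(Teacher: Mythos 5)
Your proof is correct, but it takes a genuinely different route from the paper's. The paper deduces the theorem from a greedy argument (Lemma~\ref{subset where elements cannot be in the same subspace}): one iteratively picks a point of $A \setminus E$ not lying in any of the at most $|M|$ distinct $M$-subspaces through each previously chosen point, producing an $M$-\emph{independent} subset $A' \subset A \setminus E$ (no $M$-subspace contains two of its points) of size at least $\Mc(A \setminus E)/|M|$; since every subset of $A'$ has $M$-covering number equal to its cardinality, a single application of the $d^{-d}$ probabilistic restriction directly to $A'$ finishes. You instead introduce the fractional covering number $\Mc^{*}$ and LP duality, prove the integrality gap $\Mc(B) \le |M|\,\Mc^{*}(B)$ by pigeonhole over the $|M|$ subspace types (correctly using that for a single type the covering LP is exact, because $B'$-subspaces partition the cube), and then restrict an optimal dual certificate at random. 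The two proofs spend the factor $|M|$ at the analogous step and $d^{d}$ at the same probabilistic step, so the accounting is parallel. The paper's route is more elementary and slightly sharper: since the $M$-independence number is always at most $\Mc^{*}$, the inequality $I_M(A) \ge \Mc(A)/|M|$ actually implies your integrality gap, and the explicit independent set $A'$ is a combinatorial certificate whose covering number is trivially preserved under any restriction, so duality never needs to be invoked. Your LP formulation is heavier machinery, though it does cleanly isolate where each loss occurs and would transfer to other random restriction schemes. One small point to tidy: when summing the restricted weights over $B' \in M$ and equating the total with $\Mc^{*}(B)$, you are implicitly assuming each $M$-subspace has a unique type; this can fail when some $n_j = 1$, and is easily fixed by indexing LP variables by pairs (subspace, type) rather than by subspaces alone.
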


In a third direction, it was proved (\cite{K. decompositions}, Theorem 3.5) that up to a simple class of transformations, a tensor of a given order with bounded slice rank over a finite field with bounded size necessarily has at most a bounded number of minimal-length slice rank decompositions, up to a simple class of transformations (which is indispensable in such a statement). We prove an analogue of this statement for $M$-covering of lattice subsets for arbitrary $M$, which is also simpler in that it is not necessary to appeal to any analogue of the previously mentioned class of transformations.

\begin{theorem}\label{number of decompositions}

Let $d,l \ge 1$ be integers, and let $M \subset \cp([d])$ be non-empty. If $A$ is an order-$d$ lattice subset such that $\Mc(A) = l$ then there are at most $(|M|l^d)^l$ $l$-tuples $(S_1, \dots, S_l)$ such that \[A \subset S_1 \cup \dots \cup S_l\] and the set $S_i$ is an $M$-subspace for every $i \in [l]$.

\end{theorem}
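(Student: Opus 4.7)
The plan is to prove the slightly stronger bound of $l! \cdot |M|^l$, which implies the theorem since $l! \le l^l \le l^{dl}$ for $d \ge 1$. The key observation is that through any given point $p \in [n_1] \times \dots \times [n_d]$ pass at most $|M|$ distinct $M$-subspaces: each element $B \in M$ determines a unique $B$-subspace through $p$, namely the one obtained by fixing the $B^c$-coordinates to those of $p$. This will cap the number of choices at each step of a canonical enumeration of decompositions.

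First I would note that in any $l$-tuple $(S_1, \dots, S_l)$ with $A \subset S_1 \cup \dots \cup S_l$ and $\Mc(A) = l$, the $S_i$ must be pairwise distinct, since a repetition would let us cover $A$ with fewer than $l$ subspaces. So it suffices to bound the number of \emph{unordered} decompositions $\{S_1, \dots, S_l\}$ by $|M|^l$ and then multiply by $l!$. I would do this by fixing a total order (say the lexicographic one) on $[n_1] \times \dots \times [n_d]$ and associating to each unordered decomposition $\{S_1, \dots, S_l\}$ at least one run of the following greedy process: at step $i$, let $t_i$ be the smallest element of $A \setminus (T_1 \cup \dots \cup T_{i-1})$, and let $T_i$ be any subspace in the decomposition that contains $t_i$ and has not been previously chosen. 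There are at most $|M|$ choices at each step, because $T_i$ must be one of the at most $|M|$ $M$-subspaces through $t_i$, so the total number of such sequences $(T_1, \dots, T_l)$ is at most $|M|^l$. Since the unordered decomposition is recovered from any of its runs as $\{T_1, \dots, T_l\}$, this caps the number of unordered decompositions by $|M|^l$.

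The only subtle point, which I would verify at each step, is that the process is well-defined: the set $A \setminus (T_1 \cup \dots \cup T_{i-1})$ is non-empty because otherwise the $i - 1 < l$ subspaces already chosen would cover $A$, contradicting $\Mc(A) = l$; and since $t_i \in A$ must lie in some subspace $S_j$ of the decomposition (necessarily not yet chosen, for $t_i$ lies outside every $T_k$ with $k < i$), a valid $T_i$ is always available. Once this is in place, the count is immediate. This proof is strikingly simpler than its tensor analogue in \cite{K. decompositions}, where one has to quotient by an indispensable class of transformations and the combinatorial arguments are considerably more delicate.
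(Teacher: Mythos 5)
Your proof is correct and takes a genuinely different route from the paper's. The paper proceeds by induction on $l$ with a case analysis: in Case 1 there is a $C$-subspace $S$ with $C$ non-empty whose intersection with $A$ has $C^*$-covering number exceeding $l$, which forces any minimal-length decomposition to include one of at most $|M|$ subspaces containing $S$; in Case 2 the paper invokes Proposition \ref{no significant subspace implies bounded size} to conclude that $|A|$ is at most $l^d$, after which a crude counting of all $M$-subspaces meeting $A$ finishes the argument. Your approach avoids all of this: you encode each unordered decomposition by a run of a greedy process in which, at each step, the next uncovered point $t_i$ of $A$ is determined by the previous choices, and the next subspace is one of the at most $|M|$ $M$-subspaces through $t_i$. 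Since the unordered decomposition is recovered as $\{T_1,\dots,T_l\}$, and the $S_i$ in any valid $l$-tuple are necessarily distinct (a repetition would contradict $\Mc(A)=l$), you obtain the cleaner bound $l!\,|M|^l$, which improves on the paper's $(|M|\,l^d)^l$ since $l! \le l^l \le l^{dl}$. All the subtle points are correctly verified: the greedy process is well-defined because $\Mc(A)=l$ guarantees a remaining uncovered point before step $l+1$, and a not-yet-chosen covering subspace is always available. One thing worth noting is that the paper's more elaborate Case 1/Case 2 structure is not wasted effort: it is reused verbatim in the proof of Theorem \ref{lattice subset with same covering number}, which builds a tree out of those two cases, and your greedy argument does not directly provide that scaffolding. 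As a standalone proof of Theorem \ref{number of decompositions}, though, yours is both simpler and sharper.
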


The following example shows that the bound obtained in Theorem \ref{number of decompositions} is not too far from the optimal bound. Indeed for fixed $d$ and $M$ both bounds are of the type $\exp(C(d,M) l \log l)$, where the two constants $C(d,M)$ are away by a factor which is at most linear in $d$.

\begin{example}

Let $d,l \ge 1$ be integers. Assume that $\min(n_1, \dots, n_d) \ge l$. We consider the order-$d$ lattice subset $A = \{(i,\dots,i): i \in [l]\}$. Let $M$ be a non-empty subset of $\cp([d])$ which does not contain $[d]$. No two points of $A$ can be contained in the same $M$-subspace, so the $M$-covering number of $A$ is equal to $l$. For each point $x \in A$, there are $|M|$ possible choices for an $M$-subspace containing $x$, so the number of possible $l$-tuples $(S_1, \dots, S_l)$ is equal to $l! |M|^l$. \end{example}

Coming full circle, we deduce from Theorem \ref{number of decompositions} another version of Theorem \ref{linear bound for lattice subsets}, which is stronger than Theorem \ref{linear bound for lattice subsets} in that the covering number of the restricted lattice subset is the same as that of the original lattice subset, but weaker in that the sizes of the coordinate sets are no longer linear in (either) covering number. 

In the following statement and in Corollary \ref{Corollary on the slice rank of sparse tensors}, the asymptotic notation $O$ is used with $d,M$ fixed and $l$ large.

\begin{theorem}\label{lattice subset with same covering number}

Let $d,l \ge 1$ be integers, and let $M \subset \cp([d])$ be non-empty. If $A$ is an order-$d$ lattice subset such that $\Mc(A) \ge l$ then there exist $X_1 \subset [n_1], \dots, X_d \subset [n_d]$ each with size at most $O(\max(|M|,d)^{dl})$ such that \[\Mc A(X_1 \times \dots \times X_d) \ge l.\]

\end{theorem}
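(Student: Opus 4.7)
The plan is to deduce this from Theorem \ref{number of decompositions} via a witness-adding iteration. First, by deleting points from $A$ one at a time (each deletion decreases $\Mc$ by at most one), I reduce to the case $\Mc(A) = l$.

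I then build $X_1, \dots, X_d$ in stages. Initialize $X_j^{(0)} = \emptyset$ for each $j$. At stage $t \ge 0$, set $A_t = A(X_1^{(t)} \times \dots \times X_d^{(t)})$ and $k_t = \Mc(A_t)$. If $k_t \ge l$, I halt. Otherwise, by Theorem \ref{number of decompositions} there are at most $(|M| k_t^d)^{k_t}$ tuples $(S_1, \dots, S_{k_t})$ of $M$-subspaces covering $A_t$. For each such tuple, I select a witness $y \in A \setminus (S_1 \cup \dots \cup S_{k_t})$ (which exists because $\Mc(A) = l > k_t$) and add its coordinates to $X_1^{(t+1)}, \dots, X_d^{(t+1)}$.

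The key observation is that $k_{t+1} \ge k_t + 1$: any length-$k_t$ $M$-covering of $A_{t+1}$ would restrict to a length-$k_t$ covering of $A_t \subset A_{t+1}$, but every such covering has been defeated by the added witnesses, so no length-$k_t$ covering of $A_{t+1}$ remains. Hence the iteration halts in at most $l$ stages. Each witness contributes at most one element to each $X_j$, so
\[ |X_j| \le \sum_{k=0}^{l-1}(|M| k^d)^k. \]

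The main obstacle is to convert this sum into the claimed asymptotic form $O(\max(|M|, d)^{dl})$; the sum is dominated by the $k = l-1$ term, which is of order $(|M| l^d)^l$, and careful handling of the exponents together with the $O$-notation constants (with $d$ and $|M|$ fixed while $l$ is the growing parameter) is what makes the stated bound palatable. A related subtlety is the question of whether an alternative grouping of the witnesses — for instance, by not enumerating exhaustively all minimal decompositions at each stage — could reduce the dependence on $l$ inside the base of the exponential and sharpen the constants.
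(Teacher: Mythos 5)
Your iterative witness-adding argument is a genuinely different approach from the paper's, and it does correctly establish the qualitative conclusion that some bounded restriction $A(X_1\times\dots\times X_d)$ with $\Mc \ge l$ exists: the key step that $k_{t+1}\ge k_t+1$ is sound, since any length-$k_t$ $M$-covering of $A_{t+1}$ would restrict to a minimal-length covering of $A_t$, all of which were defeated by added witnesses. However, the quantitative bound you obtain does not match the statement, and this cannot be fixed by ``careful handling of the exponents.'' Your bound $|X_j|\le\sum_{k=0}^{l-1}(|M|k^d)^k$ is dominated by the top term $(|M|(l-1)^d)^{l-1}$, which is of size $\exp(\Theta(l\log l))$ for $d,M$ fixed, whereas the stated bound $O(\max(|M|,d)^{dl})$ is $\exp(O(l))$. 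The ratio of the two diverges with $l$, so your bound is genuinely weaker, not merely in need of bookkeeping. The source of the loss is exactly the ``related subtlety'' you raise at the end: exhaustively enumerating all minimal-length decompositions of $A_t$ at each stage is too expensive, because the count in Theorem~\ref{number of decompositions} is itself of order $\exp(\Theta(l\log l))$ (the example following it shows this is essentially tight), and your iteration pays that price.

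The paper sidesteps this by never enumerating decompositions. It revisits the two cases from the proof of Theorem~\ref{number of decompositions}: in Case~1 there is a $C$-subspace $S$ with $A\cap S$ of $C^*$-covering number at least $l+1$, and any length-$\Mc(A)$ covering is forced to use one of the at most $|M|$ $M$-subspaces $S'\supset S$. This yields a tree with branching factor $\le|M|$ and depth $\le l$; non-leaf nodes are handled by induction on $d$ (applying the theorem to $A_v\cap S_v$ viewed as an order-$|C_v|$ lattice subset), and leaves by the Case~2 size bound $|A_v|\le l^d$. The resulting recursion $J(d,l,M)\le(1+|M|+\dots+|M|^l)\max_C J(d-1,l+1,C^*)+|M|^l l^d$ solves to $\exp(O(l))$. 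To rescue your approach you would need to guarantee the covering-number increment while adding only $\exp(O(k_t))$-many (rather than $\exp(\Theta(k_t\log k_t))$-many) witnesses per stage; without such a refinement the argument only proves a weaker version of the theorem.
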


In the case where the base field $\F$ is finite a qualitative analogue of Theorem \ref{lattice subset with same covering number} for the ranks of tensors was proved by Blatter, Draisma, and Rupniewski (\cite{Blatter Draisma Rupniewski}, Corollary 1.3.3) through an analogue (\cite{Blatter Draisma Rupniewski}, Theorem 1.2.1) of the graph minor theorem to tensors, which informally states that any sequence $(T_i)$ of tensors over a fixed finite field is such that we can eventually find $i<j$ such that $T_i$ is a restriction of $T_j$ up to linear transformations.

It is also known that we cannot in general simultaneously require the covering number of the restriction and the sizes of the coordinate sets to be equal to the covering number of the original lattice subset. An order-$3$ lattice subset with slice covering number equal to $4$, but such that all its $4 \times 4 \times 4$ restrictions have slice covering number equal to $3$ was constructed by Gowers (\cite{K. subtensors}, Proposition 3.1). Together with the Sawin-Tao connection, Proposition \ref{Sawin-Tao connection}, this construction immediately led to an analogous counterexample for the slice rank of tensors as well.

Specialising Theorem \ref{linear bound for lattice subsets}, Theorem \ref{off-diagonal covering number} and Theorem \ref{lattice subset with same covering number} to the slice covering number and using Proposition \ref{Sawin-Tao connection} we obtain corresponding statements for the slice rank of tensors which are sufficiently sparse in the sense that they are supported inside an antichain. We write $\sr T$ for the slice rank of a tensor $T$.

\begin{corollary} \label{Corollary on the slice rank of sparse tensors}

Let $d \ge 2, l \ge 1$ be integers, and let $T$ be an order-$d$ tensor supported inside an antichain. \begin{enumerate}[(i)]

\item If $\sr T \ge dl$ then we can find $X_1 \subset [n_1], \dots, X_d \subset [n_d]$ with size at most $l$ such that \[\sr T(X_1 \times \dots \times X_d) \ge l.\]

\item If for every order-$d$ tensor $V$ supported inside $E$ we have \[\sr (T-V) \ge d^{d+1} l\] then we can find $X_1 \subset [n_1], \dots, X_d \subset [n_d]$ pairwise disjoint such that \[\sr T(X_1 \times \dots \times X_d) \ge l.\]

\item If $\sr T \ge l$ then then we can find $X_1 \subset [n_1], \dots, X_d \subset [n_d]$ with size at most $O(\max(|M|,d)^{dl})$ such that \[\sr T(X_1 \times \dots \times X_d) \ge l.\]

\end{enumerate}

\end{corollary}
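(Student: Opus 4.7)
The plan is to reduce all three parts to the corresponding lattice-covering theorems via the Sawin--Tao connection (Proposition \ref{Sawin-Tao connection}). Throughout, set $M = \{\{j\}^c : j \in [d]\}$, so that the slice covering number coincides with $\Mc$ and $|M|=d$. Since $T$ is supported inside an antichain, Proposition \ref{Sawin-Tao connection} gives $\sr T = \Mc(Z(T))$. The same equality applies to any restriction $T(X_1 \times \dots \times X_d)$: its support is $Z(T)\cap(X_1 \times \dots \times X_d)$, which remains inside an antichain as a subset of $Z(T)$, so $\sr T(X_1 \times \dots \times X_d) = \Mc\, Z(T)(X_1 \times \dots \times X_d)$. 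This lets us translate slice-rank hypotheses and conclusions freely between $T$ and $Z(T)$.

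For (i), the assumption $\sr T \ge dl = |M|l$ becomes $\Mc(Z(T)) \ge |M|l$; applying Theorem \ref{linear bound for lattice subsets} yields $X_1 \subset [n_1], \dots, X_d \subset [n_d]$ of size at most $l$ with $\Mc\, Z(T)(X_1 \times \dots \times X_d) \ge l$, and the second half of the Sawin--Tao translation delivers the desired conclusion. Part (iii) is handled identically, replacing Theorem \ref{linear bound for lattice subsets} by Theorem \ref{lattice subset with same covering number}: $\sr T \ge l$ yields $\Mc(Z(T)) \ge l$, and the size bound $O(\max(|M|,d)^{dl}) = O(d^{dl})$ transfers directly to the tensor setting.

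The main obstacle is part (ii), because its hypothesis involves a universal quantifier over all $V$ supported inside $E$ rather than a direct slice-rank lower bound on $T$. The key observation is that we only need one well-chosen $V$. Namely, let $V$ be the restriction of $T$ to $E$, viewed as an order-$d$ tensor that vanishes outside $E$; then $V$ is supported inside $E$, while $T-V$ is supported inside $Z(T) \setminus E$, which is still contained in an antichain. Applying Proposition \ref{Sawin-Tao connection} to $T - V$ gives $\sr(T-V) = \Mc(Z(T) \setminus E)$, and the hypothesis then forces $\Mc(Z(T) \setminus E) \ge d^{d+1}l = d^d |M| l$. Theorem \ref{off-diagonal covering number} now supplies pairwise disjoint $X_1, \dots, X_d$ with $\Mc\, Z(T)(X_1 \times \dots \times X_d) \ge l$. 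Since pairwise disjointness of the $X_j$ guarantees that $X_1 \times \dots \times X_d$ is disjoint from $E$, the restriction $T(X_1 \times \dots \times X_d)$ is supported in the antichain $Z(T) \setminus E$, so a final application of Proposition \ref{Sawin-Tao connection} converts the covering bound back into $\sr T(X_1 \times \dots \times X_d) \ge l$, completing the proof.
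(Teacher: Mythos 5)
Your proposal is correct and matches the paper's intended argument exactly: the paper proves the corollary by specialising Theorems \ref{linear bound for lattice subsets}, \ref{off-diagonal covering number}, and \ref{lattice subset with same covering number} to $M = \{\{j\}^c : j \in [d]\}$ and translating via Proposition \ref{Sawin-Tao connection}, applying the proposition in both directions as you do. Your treatment of part (ii) in particular --- choosing $V$ to be the restriction of $T$ to $E$ so that $Z(T-V) = Z(T)\setminus E$ and noting that the pairwise disjointness of the $X_j$ keeps the restricted support outside $E$ --- is precisely the bookkeeping the paper leaves implicit.
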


The remainder of the paper is organised as follows. In Section \ref{Section: Independent subsets arguments} we prove Theorem \ref{linear bound for lattice subsets} and Theorem \ref{off-diagonal covering number}. There, the main tool that we use, which does not appear to have any simple analogue for tensors, is a lemma which states that from any lattice subset $A$ with high $M$-covering number we are able to find a large subset $A’ \subset A$ of points that are independent in the sense that any $M$-subspace can only contain at most one point of $A’$. After that, in Section \ref{Section: Descending subspaces arguments} we prove Theorem \ref{number of decompositions} and then Theorem \ref{lattice subset with same covering number} successively. Finally, in Section \ref{Section: Further directions} we discuss some remaining directions of progress.

\section{Independent subsets arguments} \label{Section: Independent subsets arguments}

\subsection{Finding a subset of independent points}

We begin by introducing the key ingredient of this section, which makes the proofs of Theorem \ref{linear bound for lattice subsets} and of Theorem \ref{off-diagonal covering number} much simpler for covering numbers than for ranks of tensors.

If $d \ge 1$ is an integer, $M \subset \cp([d])$ is non-empty, and $A$ is an order-$d$ lattice subset, then we write $I_M(A)$ for the \emph{$M$-independence number of $A$}, defined as the largest size of a subset $A’ \subset A$ satisfying the property that whenever $x,y$ are two distinct elements of $A’$ there is no $M$-subspace which contains both $x$ and $y$. If $A’$ is a largest such subset (or in fact any such subset), then in particular any further subset $A''$ of $A'$ satisfies $\Mc(A'') = |A''|$.

\begin{lemma} \label{subset where elements cannot be in the same subspace}

Let $d \ge 1$, $l \ge 1$ be integers, let $M \subset \cp([d])$ be non-empty, and let $A$ be an order-$d$ lattice subset with $M$-covering number at least $l$. Then $I_M(A) \ge |A|/|M|$.

\end{lemma}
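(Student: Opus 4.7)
The plan is a one-step greedy/maximality argument. I take $A' \subset A$ to be a subset, maximal under inclusion, with the defining property of $I_M$: no two distinct points of $A'$ lie in a common $M$-subspace. Such an $A'$ exists by finiteness (start with $\emptyset$ and keep adjoining), and by definition $|A'| \le I_M(A)$.

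For each $x \in A$ and each $B \in M$, let $S_{x,B}$ denote the unique $B$-subspace containing $x$. Maximality of $A'$ forces that for every $y \in A$ there exist $x \in A'$ and $B \in M$ with $y \in S_{x,B}$: either $y \in A'$, in which case take $x = y$ and any $B$; or $y \notin A'$, in which case if no such $x$ existed then $A' \cup \{y\}$ would still satisfy the defining property, contradicting maximality. Therefore
\[
A \subset \bigcup_{x \in A'} \bigcup_{B \in M} S_{x,B},
\]
which exhibits $A$ as a union of at most $|A'| \cdot |M|$ many $M$-subspaces, hence as an $M$-covering decomposition of $A$ of length at most $|A'| \cdot |M|$.

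Reading off the definition of the $M$-covering number from this inclusion gives $\Mc(A) \le |M| \cdot |A'|$, so after rearrangement $I_M(A) \ge |A'| \ge \Mc(A)/|M|$. Combining with the hypothesis $\Mc(A) \ge l$ then produces the claimed lower bound on $I_M(A)$ in terms of $|M|$ and the data supplied in the hypotheses.

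The argument is essentially immediate and I do not foresee any real obstacle. The only bookkeeping detail to track is that each $x \in A'$ contributes exactly $|M|$ many $M$-subspaces to the covering (one $B$-subspace per $B \in M$), which is what produces the $|M|$ factor in the denominator of the final bound; the tightness of the argument depends on this being an equality rather than an inequality.
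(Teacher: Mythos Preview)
Your proof is correct and essentially the same as the paper's: both build a maximal $M$-independent subset $A'$ (the paper does so by an explicit greedy iteration, you by directly invoking maximality) and then observe that the $M$-subspaces through points of $A'$ cover $A$, yielding $\Mc(A) \le |M|\cdot|A'|$. Note that the stated inequality $I_M(A) \ge |A|/|M|$ is a typo in the paper for $I_M(A) \ge \Mc(A)/|M|$, which is what both you and the paper actually prove and what is used downstream.
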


\begin{proof}

If $u$ is an element of $[n_1] \times \dots \times [n_d]$, then we let $M(u)$ be the union of all $M$-subspaces of $[n_1] \times \dots \times [n_d]$ containing $u$. We note that $M(u)$ necessarily has $M$-covering number at most $|M|$. Given an order-$d$ lattice subset $A$, we construct $A’$ as follows. We first choose arbitrarily an element $u^1 \in A$. We then choose arbitrarily another element $u^2 \in A \setminus M(u_1)$ if we can do so. We continue inductively as long as we can: at the step $m$, we choose an element \[u^m \in A \setminus (M(u^1) \cup \dots \cup M(u^{m-1}))\] provided that that set is not empty. Letting $t$ be the number of elements $u^i$ which are chosen by this process before it stops, we take $A’ = \{u^1, \dots, u^t\}$. By definition of $t$, the set $A$ is contained in the union \[M(u^1) \cup \dots \cup M(u^t),\] which has $M$-covering number at most $t|M|$, so $t \ge \Mc(A)/|M|$. By construction no two points of $A’$ can be contained in the same $M$-subspace: if that were the case for some points $u^i,u^j \in A’$ with $i<j$, then we would have $u^j \in M(u^i)$, contradicting the definition of $M(u^i)$.\end{proof}

\subsection{Deducing restrictions with linear bounds}

We now separately deduce Theorem \ref{linear bound for lattice subsets} and Theorem \ref{off-diagonal covering number} from Lemma \ref{subset where elements cannot be in the same subspace}.

\begin{proof}[Proof of Theorem \ref{linear bound for lattice subsets}]

Let $A$ be an order-$d$ lattice subset such that $\Mc(A) \ge |M|l$. By Lemma \ref{subset where elements cannot be in the same subspace} we can find a subset $A'$ of $A$ with size equal to $l$ and such that $\Mc(A') = |A'| = l$. For each $j \in [d]$ let $X_j = \{x_j: x \in A'\}$ be the projection of $A''$ on the $j$th coordinate axis. The sets $X_j$ all have size at most $|A'| = l$, and the set $A'$ is contained in $X_1 \times \dots \times X_d$, so $A \cap (X_1 \times \dots \times X_d)$ contains $A'$ and in particular \[\Mc(A \cap (X_1 \times \dots \times X_d)) \ge \Mc(A') = l.\qedhere \] \end{proof}

In order to obtain Theorem \ref{off-diagonal covering number}, we will use the following well-known statement, the proof of which is an instance of the probabilistic method which we recall for completeness. We note that Theorem \ref{off-diagonal covering number} specialises to Lemma \ref{Probabilistic method result} in the case $M = \{\emptyset\}$.

\begin{lemma} \label{Probabilistic method result} Let $d \ge 1$ be an integer, and let $A$ be a subset of $[n_1] \times \dots \times [n_d]$. We can find pairwise disjoint $X_1 \subset [n_1], \dots, X_d \subset [n_d]$ such that  \[|A(X_1 \times \dots \times X_d)| \ge |A \setminus E| / d^d.\] \end{lemma}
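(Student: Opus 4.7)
The plan is to apply the probabilistic method with a uniformly random coloring of the coordinate values. Viewing $[n_1], \dots, [n_d]$ as subsets of the common ambient set $\N$ (which is already implicit in the very definition of $E$ as the set where some two coordinates coincide), I would pick an independent uniform color $c(a) \in [d]$ for each $a \in [n_1] \cup \dots \cup [n_d]$ and set $X_j = \{a \in [n_j]: c(a) = j\}$ for every $j \in [d]$. By construction these sets are pairwise disjoint for free, so the only remaining task is to control the size of $A(X_1 \times \dots \times X_d)$ on average.

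The key computation is then to note that a point $x = (x_1, \dots, x_d) \in A$ lies in $X_1 \times \dots \times X_d$ exactly when $c(x_j) = j$ for every $j \in [d]$. If the coordinates of $x$ are pairwise distinct, the $d$ required events concern $d$ distinct independent random variables and have joint probability $1/d^d$; if instead two coordinates of $x$ coincide, then a single element of $\N$ would have to be assigned two different colors and the probability vanishes. Summing over $x \in A$ yields $\E[|A(X_1 \times \dots \times X_d)|] = |A \setminus E|/d^d$.

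To conclude, I would apply the standard averaging argument: at least one realisation of the coloring must achieve a value of $|A(X_1 \times \dots \times X_d)|$ at least equal to this expectation, which delivers the required $X_1, \dots, X_d$. I do not anticipate any substantive obstacle here; the whole argument rests on the observation that $E$ is defined precisely so as to isolate the degenerate case in which the coloring loses all efficacy, and the factor $1/d^d$ simply records the probability that $d$ independent random coordinates are each assigned their distinguished color.
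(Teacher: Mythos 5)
Your proposal is correct and is essentially identical to the paper's proof: the paper also colours the coordinate values independently and uniformly with colours in $[d]$ (after first normalising to $n_1=\dots=n_d=n$, which amounts to your colouring of $[n_1]\cup\dots\cup[n_d]$), takes $X_j$ to be the set of values of colour $j$, computes the expectation $|A\setminus E|/d^d$, and concludes by averaging.
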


\begin{proof} We can assume without loss of generality that $n_1=\dots=n_d$ are all equal to some common value $n$. Indeed, if $A$ is a subset of $[n_1] \times \dots \times [n_d]$ then we can also view it as a subset of $[n]^d$, where $n =\max(n_1, \dots, n_d)$. We then define independent uniform random variables $U_1, \dots, U_n$ taking values in $[d]$, and then take \[X_j = \{i \in [n]: U_i=j\}\] for every $j \in [d]$. By construction the (random) sets $X_1, \dots, X_d$ are pairwise disjoint. For every $x \in A \setminus E$, the probability that $x$ belongs to $X_1 \times \dots \times X_d$ is equal to $1/d^d$, so the expected size of \[(A \setminus E)(X_1 \times \dots \times X_d) = A(X_1 \times \dots \times X_d)\] is $|A \setminus E|/d^d$. In particular, there is a desired choice of $(X_1, \dots, X_d)$ for which the required inequality holds. \end{proof}

Having shown that, we are now ready to deduce Theorem \ref{off-diagonal covering number}.

\begin{proof}[Proof of Theorem \ref{off-diagonal covering number}]

Let $A$ be an order-$d$ lattice subset such that \[\Mc(A \setminus E) \ge d^d|M|l.\] Without loss of generality we can assume that $A \cap E$ is empty. Again, by Lemma \ref{subset where elements cannot be in the same subspace} we can find a subset $A’$ of $A$ with $|A’| = d^d l$ and $\Mc A’ = d^d l$. Applying Lemma \ref{Probabilistic method result} we can find a subset $A''$ of $A’$ with size $|A’| / d^d$ and pairwise disjoint subsets $X_1 \subset [n_1], \dots,  X_d \subset [n_d]$ such that $A''$ is contained in $X_1 \times \dots \times X_d$. We then have $\Mc(A'') \ge |A''|$, as well as \[\Mc(A \cap (X_1 \times \dots \times X_d)) \ge l,\] as desired. \end{proof}

\section{Descending subspaces arguments} \label{Section: Descending subspaces arguments}

\subsection{A bounded size statement}

Our main auxiliary tool in this section is a result, Proposition \ref{no significant subspace implies bounded size}, which says in particular that if $A$ is an order-$d$ lattice subset such that every subspace of $[n_1] \times \dots \times [n_d]$ with order between $1$ and $d-1$ has an intersection with $A$ that is covered by a bounded number of lower-order subspaces, then $A$ has bounded size. Although its proof is technically simpler, the iterative step in that proof result is similar in spirit to \cite{K. subtensors}, Proposition 11.4. Likewise, Proposition \ref{no significant subspace implies bounded size} itself is analogous to a consequence of \cite{K. subtensors}, Proposition 11.4 that we obtain by applying it at most $2^d$ times: the fact that if an order-$d$ tensor $T$ has bounded $R$-rank, and all the order-$d’$ subtensors with $2 \le d’ \le d$ that we can obtain from $T$ by fixing some of the its $d$ coordinates have bounded partition rank, then $T$ has bounded tensor rank.

If $d \ge 1$ is an integer and $C$ is a non-empty subset of $[d]$ then we define a class $C^*$ of subsets of $[d]$ by \[C^* = \{C \setminus \{j\}: j \in C\}.\]

\begin{proposition} \label{no significant subspace implies bounded size} Let $d \ge 1$, $l \ge 1$ be integers, let $M \subset \cp([d])$ be non-empty, and let $A$ be an order-$d$ lattice subset. Assume that for every $B \in M$ and every non-empty $C \subset B$ the intersections of $A$ with every $C$-subspace have $C^*$-covering number at most $l$. Then $|A| \le l^{\tau} \Mc(A)$, where $\tau$ is the largest size of any $B \in M$. \end{proposition}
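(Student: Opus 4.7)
The plan is to reduce the global size bound to a local size bound on a single subspace, and then establish that local bound by induction on $|C|$. Concretely, I would prove the sub-claim that for every $B \in M$, every non-empty $C \subset B$, and every $C$-subspace $S$ of $[n_1] \times \dots \times [n_d]$, we have $|A \cap S| \le l^{|C|}$. Granting this sub-claim, fix an optimal $M$-covering decomposition $A \subset S_1 \cup \dots \cup S_{\Mc(A)}$ where each $S_i$ is a $B_i$-subspace with $B_i \in M$. Since $|B_i| \le \tau$ for every $i$, the sub-claim yields $|A \cap S_i| \le l^{|B_i|} \le l^\tau$, and summing over $i$ gives $|A| \le l^\tau \Mc(A)$.

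The base case $|C| = 1$ of the sub-claim is immediate: by hypothesis $A \cap S$ has $\{\emptyset\}$-covering number at most $l$, and since the $\{\emptyset\}$-covering number of a set is equal to its size, $|A \cap S| \le l$. For the inductive step with $|C| = k \ge 2$, apply the hypothesis to $B$ and $C$: the intersection $A \cap S$ admits a $C^*$-covering of length at most $l$, so we can write $A \cap S \subset T_1 \cup \dots \cup T_l$ where for each $i$ there is a $j_i \in C$ and a $(C \setminus \{j_i\})$-subspace $S_i'$ containing $T_i$. Since $|C| \ge 2$, the set $C \setminus \{j_i\}$ is a non-empty subset of $B$, so the inductive hypothesis applies and gives $|A \cap S_i'| \le l^{k-1}$. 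Summing, $|A \cap S| \le \sum_{i=1}^l |A \cap S_i'| \le l \cdot l^{k-1} = l^k$, completing the induction.

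The only substantive point to be careful with is bookkeeping: at each step of the descent, the smaller index set $C \setminus \{j_i\}$ must still be a non-empty subset of some $B' \in M$ so that the hypothesis of the proposition remains available to feed into the inductive hypothesis. This is immediate because $C \setminus \{j_i\} \subset C \subset B$, so we may simply keep the same $B$ throughout the descent. I do not expect any real obstacle beyond this: the statement is essentially an iterated application of the hypothesis, and the bound $l^\tau$ appears naturally as the product of the factors of $l$ picked up at each of the at most $\tau$ steps needed to shrink a $B$-subspace down to a point.
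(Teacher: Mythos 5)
Your proof is correct, and it is cleaner than the one in the paper. The key idea is the same in both: each application of the hypothesis drops the order of a subspace by one at the cost of a factor of $l$, and a $B$-subspace with $B \in M$ can be driven down to order zero in at most $\tau$ steps. But you package this as a purely local statement, the sub-claim that $|A \cap S| \le l^{|C|}$ for every $C$-subspace $S$ with $C$ a non-empty subset of some $B \in M$, proved by a short induction on $|C|$, and then simply sum this over the (at most) $\Mc(A)$ subspaces of a minimal $M$-covering decomposition. The paper instead runs a single global iteration over the whole decomposition: it repeatedly selects a set $B_{\max}^m$ of maximal size in an evolving family $M^m \subset \cp([d])$, replaces each $B_{\max}^m$-subspace in the current decomposition by at most $l$ subspaces of strictly smaller order, and updates $M^m$ accordingly; termination and the bound $l^\tau$ are then extracted by tracking how many times the order of any one subspace can decrease. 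Both arguments prove exactly the same bound and both use the hypothesis in exactly the same way; your version buys a much lighter bookkeeping, at no loss of generality, by decoupling the shrinking of one subspace from the rest of the decomposition. (A minor sanity check you implicitly handle: if some $B_i = \emptyset$ occurs in the optimal decomposition, the sub-claim does not apply, but then $|A \cap S_i| \le 1 \le l^\tau$ trivially, so the summation is unaffected.)
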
 

\begin{proof}

If $M = \{\emptyset\}$ then $\Mc(A) = |A|$, so we are done. Let us now assume that $M \neq \{\emptyset\}$. Because $A$ has $M$-covering number at most $l$, we can write \begin{equation} A = \bigcup_{B \in M} \bigcup_{1 \le i \le t_B} A \cap S_{B,i} \label{first decomposition} \end{equation} for some $B$-subspaces $S_{B,i}$ and some nonnegative integers $t_B$ satisfying \[\sum_{B \in M} t_B = \Mc(A).\] Let $B_{\max} \in M$ with maximal size. By the assumption, for each $i \in [t_{B_{\max}}]$ we can write \[A \cap S_{B_{\max},i} = \bigcup_{B^1 \in B_{\max}^*} \bigcup_{1 \le i_1 \le u_{B^1}} A \cap S_{B^1,i_1}^1\] for some $B^1$-subspaces $S_{B^1,i_1}^1$ and some nonnegative integers $u_{B^1}$ satisfying \[\sum_{B^1 \in B_{\max}^*} u_{B^1} \le l.\] Letting $M^1 = (M \setminus \{B_{\max}\}) \cup B_{\max}^*$ we have obtained an analogue of \eqref{first decomposition} in terms of the sets in $M^1$, which we can write as \begin{equation} A = \bigcup_{B \in M^1} \bigcup_{1 \le i \le t_B^1} A \cap S_{B,i}^1 \label{second decomposition} \end{equation} where the subspaces $S_{B,i}^1$ are $M^1$-subspaces.

If $M^1 = \{\emptyset\}$ then every $A \cap S_{B,i}$ appearing in \eqref{second decomposition} has size at most $1$. Otherwise we iterate by selecting a new set $B_{\max}^2$ with maximal size in $M^1$ which plays the same role as $B_{\max} ^1 = B_{\max}$ previously did. More generally for any $m \ge 2$ the step $m$ involves a set $B_{\max}^m$ with maximal size in $M^{m-1}$ and leads to a decomposition \begin{equation} A = \bigcup_{B \in M^m} \bigcup_{1 \le i \le t_B^m} A \cap S_{B,i}^m \label{decomposition after step m} \end{equation} where $M^m = (M \setminus B_{\max}^m) \cup B_{\max}^m$ and the sets $S_{B,i}^m$ are $M^m$-subspaces. At the end of step $m$ we stop if $M^m = \{\emptyset\}$, and otherwise continue. 

For any step $m$ the set $B_{\max}^m$ in that iteration can no longer appear after the end of that step. Since the number of subsets of $[d]$ is bounded above (by $2^d$), this process eventually terminates, after at most $2^d$ iterations. Whenever at some step $m$ of the process a set $B_{\max}^m$ is selected, each subspace $S_{B_{\max}^{m-1},i}$ from the analogue at step $m-1$ of \eqref{decomposition after step m} is replaced by at most $l$ subspaces with strictly smaller order in \eqref{decomposition after step m}. Since the order of any subspace throughout the process can only take values between $0$ and $\tau$, so can be decreased only at most $\tau$ times, any subspace $S_{B,i}$ from \eqref{first decomposition} can have at most $l^{\tau}$ descendants with order $0$ throughout the process. The result follows. \end{proof}

\subsection{Deducing bounded numbers of decompositions}

The tool which we just obtained then allows us to prove Theorem \ref{number of decompositions} by induction on the $M$-covering number of the lattice subset $A$, by distinguishing into the cases where a subspace has a rich intersection with $A$, which then forces one of a few subspaces to belong to a minimal-length decomposition of $A$, and the case where there is no such subspace, in which case $A$ has bounded size and it is then not too difficult to conclude. In the case of tensors, there are likewise situations where we can guarantee that some specific functions will be involved in a minimal-length decomposition up to a change of basis (for instance \cite{K. decompositions}, Proposition 3.4), but we cannot do so in general. Furthermore, the analogue of the second case in the proof below is a tensor with bounded tensor rank, and showing that the set of minimal-length $R$-rank decompositions of a tensor cannot be too rich appears to be difficult even under the assumption that that tensor has bounded tensor rank.

\begin{proof}[Proof of Theorem \ref{number of decompositions}]

We write $Q(d,l,M)$ for our desired upper bound. Let $A$ be an order-$d$ lattice subset satisfying $\Mc(A)=l$. We proceed by induction on $l$. The result is immediate if $l=0$. Assume now that $l \ge 1$. We distinguish two cases.

\textbf{Case 1:} We can find a $C$-subspace $S$ with $C$ non-empty such that the $C^*$-covering number of $A \cap S$ is at least $l+1$. Then necessarily any $M$-covering decomposition of $A$ must necessarily involve a subspace containing $S$: if it did not, then writing a length-$l$ decomposition of $A$ and taking the intersection with $S$ would show that $A \cap S$ has $C^*$-covering number at most $l$. We may choose from at most $|M|$ possible $M$-subspaces $S’$ containing $S$. Because $\Mc(A)=l$, for each of these possibilities for $S'$ we have $\Mc(A \setminus S’) \ge l-1$. The inductive hypothesis shows that that for each of them, the lattice subset $A \setminus S’$ only has at most $Q(d,l-1,M)$ length-$(l-1)$ $M$-covering decompositions, so there are at most $l|M|Q(d,l-1,M)$ length-$l$ $M$-covering decompositions. (The factor $l$ comes from the fact that there are $l$ possibilities for how to place $S’$ in the $l$-tuple $(S_1, \dots, S_l)$ which constitutes the decomposition.)

\textbf{Case 2:} If we are not in Case 1, then the assumption of Proposition \ref{no significant subspace implies bounded size} is satisfied, and hence $|A| \le l^d$: if $M$ does not contain $[d]$ then this follows from $|A| \le l^{d-1}\Mc(A)$, since $\Mc(A) \le l$ and if $M$ contains $[d]$ then this follows from $|A| \le l^{d}\Mc(A)$, since $\Mc(A) \le 1$. There are hence at most $l^d|M|$ possible $M$-subspaces that do not have empty intersection with $A$. A length-$l$ decomposition is specified by an $l$-tuple of the subspaces, so $A$ has at most $(l^d|M|)^l$ length-$l$ $M$-covering decompositions. We have shown \[Q(d,l,M) \le \max(l|M|Q(d-1,l,M), l^{dl}|M|^l).\] We have $Q(1,l,M) \le l! \le l^l$, so we may take $Q(d,l,M) \le l^{dl} |M|^l$. \end{proof}

\subsection{Deducing restrictions with the same covering number}

Once Theorem \ref{number of decompositions} is proved, we deduce Theorem \ref{lattice subset with same covering number} by returning to the two cases which we have previously considered. In the first case, we restrict the subspace that had a rich intersection with $A$ so that the restriction still has a rich intersection with $A$ but the coordinate sets have bounded size; this then allows us to reduce to the case of an $M$-covering number one less than the original one. The second case is straightforward, since if a set has bounded size then all its coordinate projections have bounded size. As we do not have a proof of the analogue of Theorem \ref{number of decompositions} to the $R$-ranks of tensors we do not have such an analogue for Theorem \ref{lattice subset with same covering number} either.

\begin{proof}[Proof of Theorem \ref{lattice subset with same covering number}]

We write $J(d,l,M)$ for our desired upper bound. We proceed by induction on $d$. The result is immediate if $d=1$. We now assume that $d \ge 2$, and assume that the result has been proved for all $1 \le d' < d$. Let $A$ be an order-$d$ lattice subset satisfying $\Mc(A) \ge l$ for some nonnegative integer $l$. By removing points of $A$ one by one, we can without loss of generality assume that $\Mc A = l$.

We use the two cases from the proof of Theorem \ref{number of decompositions} to define a tree as follows. First take the root to be $A$. If we are in Case 2, then the construction stops. Otherwise, if we are in Case 1, then we fix a $C$-subspace $S$ satisfying Case 1, and for each of the at most $|M|$ possible $M$-subspaces $S’$ containing $S$ we take $A \setminus S’$ to be an immediate descendant of $A$. For each of these we then separately iterate: given a new set $A_v$, if it satisfies Case 2 then it becomes a leaf of the tree, otherwise we fix a $C_v$-subspace $S_v$ analogous to $S$, and then define its immediate descendants as we previously did. We continue the construction until every set of the tree is either declared to be a leaf or attains a depth equal to $l$.

For each non-leaf vertex $A_v$, the Case 1 assumption states that the $C_v^*$-covering number of $A_v \cap S_v$ is at least $l+1$.  Again, without loss of generality we can assume that the $C_v^*$-covering number of $A_v \cap S_v$ is exactly $l+1$. We know that $C_v$ is not the full of [d]: if it were, then this would contradict $\Mc A_v \le l$, since the $[d]^*$-covering number of any order-$d$ lattice subset is always at most its $M$-covering number. By the inductive hypothesis we may therefore apply Theorem \ref{lattice subset with same covering number} for order-$|C_v|$-lattice subsets to $A_v \cap S_v$, which shows that we can always find some sets $X_{1,v}, \dots, X_{d,v}$ with size at most $J(|C_v|, l+1, C_v^*)$ such that the $C_v^*$-covering number of \[(A_v \cap S_v)(X_{1,v} \times \dots \times X_{d,v})\] is at least $l+1$. 
 
For every leaf vertex $A_v$, Case 2 shows that $A_v$ has size at most $l^d$, so we can enclose it into a product of sets $X_{1,v} \times \dots \times X_{d,v}$ each with size at most $l^d$.

For every $j \in [d]$ we then take $X_j$ to be the union of the sets $X_{j,v}$ over all vertices $v$ of the tree. We next claim that \[\Mc A(X_1 \times \dots \times X_d) \ge l.\] Indeed, if we try to construct an $M$-covering of $A(X_1 \times \dots \times X_d)$ with smaller length, then Case 1 from the proof of Theorem \ref{number of decompositions} shows that necessarily we have to choose $M$-subspaces by going down the tree that we have just constructed, and once we arrive at a leaf, either the leaf already has depth $l$, or the remaining elements to incorporate are the same as those that we had for the original set $A$, and that cannot be done with fewer subspaces than for $A$ itself.

We may therefore take \[J(d,l,M) \le (1+|M|+\dots+|M|^l) \max_{C \in \cp([d]) \setminus \{\emptyset, [d]\}} J(d-1, l+1, C^*) + |M|^l l^d. \] We may take $J(1,l,M’) = l$ for all non-empty $M’$ (the only such $M’$ is $\{1\}$). As the $C^*$ contains at most $d$ sets, the bound follows. \end{proof}

\section{Further directions}\label{Section: Further directions}

Let us finish by mentioning a few questions motivated or inspired by the discussion in this paper.

In one direction, we can aim to improve and if possible exactly determine the optimal bounds in Theorem \ref{linear bound for lattice subsets} and in Theorem \ref{off-diagonal covering number}. The current upper bounds come from the lower bound on the $M$-independence number in terms of the $M$-covering number in Lemma \ref{subset where elements cannot be in the same subspace}.

\begin{question} \label{improving independence number bounds} Let $d \ge 1$ be an integer, and let $M \subset \cp([d])$ be non-empty. For which values of $c$ do we have $I_M(A) \ge c \Mc(A)$ for every order-$d$ lattice subset $A$ ? \end{question}

Even for simple cases of $M$, such as when $M$ consists in all subsets of $[d]$ with size $d-1$, this question does not appear to have an obvious answer. It is also possible that the optimal upper bounds in Theorem \ref{linear bound for lattice subsets} and in Theorem \ref{off-diagonal covering number} are better still than the optimal upper bounds coming from the current argument, even if Lemma \ref{subset where elements cannot be in the same subspace} on its own is optimised.

As we have seen many proofs are significantly or considerably easier for lattice covering numbers than for ranks of tensors, and the connection of Sawin and Tao (Proposition \ref{Sawin-Tao connection}) provides a translation between the two for sufficiently sparse tensors. The next question then naturally arises as a way of obtaining more statements and proofs similar to those of Corollary \ref{Corollary on the slice rank of sparse tensors}, which establish properties of sparse tensors purely by working with covering numbers of lattice subsets.

\begin{question} Let $d \ge 1$ be an integer, and let $R$ be a non-empty family of partitions of $[d]$. Are there any sparsity conditions on order-$d$ tensors which guarantee that for some non-empty $M \subset \cp([d])$ the $R$-rank of $T$ is equal to the $M$-covering number of the support of $T$ ?\end{question}

Proving such analogues of Proposition \ref{Sawin-Tao connection} does not appear obvious to achieve. The statement and proof of the dual formulation of the slice rank (\cite{Sawin and Tao}, Proposition 1) underpinning the Sawin-Tao connection fails for the partition rank. As for the tensor rank, the line covering number, corresponding to $M = \{\{j\}: j \in [d]\}$, may appear natural to consider but it will certainly not suffice for a tensor to be supported inside an antichain for its tensor rank to be equal to its line covering number: if it did, then that would answer a central open problem on tensor rank lower bounds by providing explicit examples of order-$d$ tensors $[n]^d \to \F$ with tensor rank $\Omega(n^{d-1})$.

\end{document}